\documentclass[11pt]{article}
\usepackage[utf8]{inputenc}
\usepackage{amsmath,amssymb,amsthm}
\usepackage[numbers,sort&compress]{natbib}
\usepackage{booktabs}
\usepackage{array}
\usepackage{tabularx}
\usepackage{geometry}
\usepackage{caption}
\usepackage{float}
\usepackage{hyperref}
\usepackage{authblk}
\usepackage{orcidlink}
\usepackage{xcolor}
\usepackage{tikz}
\newtheorem{theorem}{Theorem}
\newtheorem{lemma}[theorem]{Lemma}
\newtheorem{proposition}[theorem]{Proposition}
\newtheorem{corollary}[theorem]{Corollary}
\newcommand{\zl}[1]{Z^+_{(1)}(#1)}
\newcommand{\zleak}[2]{Z^+_{(#1)}(#2)}
\newcommand{\sbd}[2]{\partial^{#1}_1(#2)}
\newcommand{\set}[1]{\{#1\}}
\newcolumntype{Y}{>{\raggedright\arraybackslash}X}
\hypersetup{
  colorlinks=true,
  linkcolor=blue!55!black,
  citecolor=blue!55!black,
  urlcolor=blue!55!black,
  pdftitle={Sharp Edge-Edit Bounds at Every Level for Leaky Positive Semidefinite Forcing},
  pdfauthor={Domenico Frijio}
}

\begin{document}

\title{\LARGE\textbf{Sharp Edge-Edit Bounds at Every Level for Leaky Positive Semidefinite Forcing}}
\author[1]{Domenico Frijio\orcidlink{0009-0005-2747-3961}}
\affil[1]{Horizon Research\\
  {\fontsize{9}{9}\selectfont\texttt{dfrijio@horizonrsc.com}}}
\date{19 September 2026}
\maketitle

\begin{abstract}
This work disproves the 1-leaky positive semidefinite edge-deletion conjecture and replaces it with a sharp theorem.
For every leak level \(\ell\) and edge \(e\), one has \(\lvert\zleak{\ell}{G}-\zleak{\ell}{G-e}\rvert\le2\).
More generally, if two graphs differ only on edges with both endpoints in \(S\), their parameters differ by at most \(\lvert S\rvert\).
An endpoint-sensitive refinement recovers an increase of at most one whenever some minimum set for \(G-e\) contains an endpoint of \(e\).
Both signs are sharp for every positive leak level.
Joining two copies of \(K_{\ell+1}\) by a bridge gives \(\zleak{\ell}{G}=2\ell\) and \(\zleak{\ell}{G-e}=2\ell+2\).
For every \(\ell\ge2\), a connected clique-leaf pair of order \(2\ell+3\) gives the opposite difference.
The remaining positive-difference one-leak case is attained by connected graphs on nine vertices with \(\zl H=4\) and \(\zl G=6\).
Explicit forcing sequences, fort certificates, and an exact verifier check the finite extremal example and stress-test the general results.
\end{abstract}

\noindent\textbf{Keywords:} positive semidefinite zero forcing; leaky forcing; edge spread; fort; transversal; counterexample

\section{Introduction}

Leaky positive semidefinite forcing combines the PSD color-change rule with failed forcing vertices.
Zero forcing arose from inverse eigenvalue problems \cite{AIM2008,Barioli2010}, and edge spread was studied in \cite{Edholm2012}.
The PSD rule and its matrix bounds were developed in \cite{Ekstrand2013,Peters2012}.
Standard leaky forcing was introduced in \cite{Alameda2024}; resilience and generalized leak models were developed in \cite{Alameda2022,Alameda2023}.
Recent extensions treat graph products and exact families \cite{Herrman2024}.
Computational methods appear in \cite{Brimkov2019,Smith2020,Fallat2016}, while controllability gives a parallel motivation \cite{Burgarth2013}.

For standard leaky forcing, Bjorkman et al. proved the sharp bound
\(\lvert Z_{(\ell)}(G)-Z_{(\ell)}(G-e)\rvert\le2\), with both signs sharp for every \(\ell\ge1\), in their Theorem~5.2 \cite{Bjorkman2025}.
The standard clause below is an independent support-transfer proof of that result.
Its extension to PSD forcing is the issue considered here.

Elias et al. introduced \(Z^+_{(\ell)}\), proved one side of the 1-leaky edge-spread estimate, and proposed
\begin{equation}\label{eq:conjecture}
  Z^+_{(1)}(G)\le Z^+_{(1)}(G-e)+1.
\end{equation}
They also defined possibly disconnected \(\ell\)-leaky PSD forts and characterized forcing sets as their transversals \cite[Definition~32 and Theorem~34]{Elias2025}.
Theorem~\ref{thm:fort} below simplifies that characterization: only connected forts are needed, and each is specified by one singleton-boundary inequality.
Thus the novelty is the connected reduction and its explicit minimum-transversal formulation, not the general fort obstruction.
Elias et al. reported exhaustive verification of \eqref{eq:conjecture} for connected graphs through order eight \cite{Elias2025}.

We determine the optimal universal PSD replacement and prove sharpness in both directions at every positive leak level.
The main engine is a blue-support transfer theorem for arbitrary edge edits supported on a vertex set.
A connected clique-leaf family handles every \(\ell\ge2\); an order-nine pair handles one leak.

\begin{theorem}\label{thm:main}
For every finite simple graph \(G\), every \(e\in E(G)\), and every \(0\le\ell\le |V(G)|\),
\[
  -2\le \zleak{\ell}{G}-\zleak{\ell}{G-e}\le2.
\]
For every \(\ell\ge1\), both constants are attained.
The lower equality is attained with \(G\) connected.
The upper equality is attained with both graphs connected; for \(\ell\ge2\), they may be chosen on \(2\ell+3\) vertices.
\end{theorem}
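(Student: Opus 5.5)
The theorem splits into a universal bound and a set of sharpness constructions; I will treat them separately.

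\textbf{The bound.} The bound comes from the blue-support transfer principle announced in the introduction. If \(G\) and \(G'\) agree except on edges with both endpoints in a set \(S\), then \(\lvert\zleak{\ell}{G}-\zleak{\ell}{G'}\rvert\le\lvert S\rvert\). Taking \(S=e\), so that \(\lvert S\rvert=2\), gives \(-2\le\zleak{\ell}{G}-\zleak{\ell}{G-e}\le2\).

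To prove the transfer statement, take an \(\ell\)-leaky PSD forcing set \(B\) for \(G'\) and show that \(B\cup S\) forces \(G\) against every leak set \(L\) with \(\lvert L\rvert\le\ell\). The approach is to replay the \(G'\) forcing sequence for the same leak set.

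\emph{Why the replay works.} Since \(S\) is blue from the start, a vertex outside \(S\) has the same neighbourhood in both graphs. Under the PSD rule, the relevant object is the component of \(G-\text{blue}\) (or of \(G'-\text{blue}\)) containing the target. Deleting the blue set \(S\) removes every edited edge, so \(G-(X\cup S)=G'-(X\cup S)\) for any \(X\). The components that matter are therefore identical.

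\emph{Checking a force.} Suppose \(u\) forces \(w\) in \(G'\) within a component \(C\) of \(G'-X\). Then \(u\) has unique white neighbour \(w\) in some component of \(G'-X\). Restricted to \(G-(X\cup S)\), this is a component of the same graph, and \(u\) has the same neighbours there. The only exception is \(u\in S\), whose neighbourhood may differ. In that case the force is either unnecessary or replaced.

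A cleaner route is the fort characterization in Theorem~\ref{thm:fort}. A set is forcing iff it is a transversal of all connected \(\ell\)-leaky PSD forts. Any fort of \(G\) that avoids \(S\) is a fort of \(G'\), because its connectivity and singleton-boundary conditions involve only vertices outside \(S\) or their neighbours, and those adjacencies agree. Hence \(B\cup S\) meets every fort of \(G\): forts meeting \(S\) are hit by \(S\), and forts avoiding \(S\) are hit by \(B\).

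\emph{Main obstacle.} The hardest step is verifying that the singleton-boundary inequality for a fort \(F\) with \(F\cap S=\emptyset\) is invariant under the edit. Boundary vertices in \(S\) have edited edges only inside \(S\), not into \(F\), so their adjacency to \(F\) is unchanged. Symmetry then gives both directions.

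\textbf{Sharpness, lower sign.} For the lower equality I will use \(G\) equal to two copies of \(K_{\ell+1}\) joined by a bridge \(e\).

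\emph{Computing \(\zleak{\ell}{G}=2\ell\).} For the upper bound, exhibit a set of \(\ell\) vertices in each clique, avoiding the bridge endpoints, and check all leak sets. For the lower bound, use fort certificates: certain \((\ell+1)\)-sets form forts, so \(\ell\) vertices must be missed per side.

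\emph{Computing \(\zleak{\ell}{G-e}=2\ell+2\).} Here \(G-e\) is \(2K_{\ell+1}\), and each \(K_{\ell+1}\) needs all \(\ell+1\) vertices, since otherwise \(\ell\) leaks on blue vertices block every force.

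\textbf{Sharpness, upper sign.} For \(\ell\ge2\), I will take the clique-leaf pair on \(2\ell+3\) vertices and compute both parameters via forcing sequences (upper bounds) and disjoint forts (lower bounds). For \(\ell=1\), the nine-vertex pair with \(\zl H=4\) and \(\zl G=6\) is handled by explicit certificates. For \(\ell\ge2\) I expect the fort bookkeeping in this family to be the main computational obstacle.
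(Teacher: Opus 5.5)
Your skeleton matches the paper's. The bound is the case $S=e$ of blue-support transfer (Corollary~\ref{cor:edge}). The lower sign comes from two copies of $K_{\ell+1}$ joined by a bridge (Theorem~\ref{thm:uniform}). The upper sign comes from the clique-leaf family for $\ell\ge2$ and the nine-vertex pair for $\ell=1$.

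\textbf{The transfer step takes a different route.} The paper (Theorem~\ref{thm:support}) replays a forcing sequence for a fixed leak set, starting from $B\cup S$. It drops forces into already-blue vertices via Lemma~\ref{lem:monotone}. Its key observation is that once $S$ is blue, the two graphs have the same white graph and the same blue-to-white edges. So a source in $S$ is not an exception, because every edited edge joins two blue vertices. Your hedge that such a force is ``unnecessary or replaced'' was never needed.

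Your fort route through Theorem~\ref{thm:fort} is correct. A connected $F$ with $F\cap S=\varnothing$ has the same internal edges, and the same set $N(x)\cap F$ for every $x$, in both graphs. Hence it is a fort of both with the same singleton boundary, and $B\cup S$ is a transversal. Given the connected-fort characterization this is shorter, but it is tied to the PSD rule and to the whole family of leak sets. The replay argument is rule-independent and works one leak set at a time, which is how the paper also obtains the standard-rule bound. For Theorem~\ref{thm:main}, which concerns only $Z^+_{(\ell)}$, your version suffices.

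\textbf{One step needs repair.} Your lower bound $\zleak{\ell}{G}\ge 2\ell$ for the bridged cliques rests on ``certain $(\ell+1)$-sets form forts, so $\ell$ vertices must be missed per side.'' This inference is invalid. Every connected $(\ell+1)$-vertex set of $G$ is either a whole clique or crosses the bridge, so all such forts are hit by the two bridge endpoints. They therefore cannot certify more than $2$. The correct certificate uses the singletons $\set{x}$, where $x$ is a clique vertex other than a bridge endpoint. Such an $x$ has degree $\ell$, so $\sbd{G}{\set{x}}=N_G(x)$ has size $\ell$. This is Lemma~\ref{lem:degree}, which is what the paper uses and what you already invoke for $2K_{\ell+1}$.

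\textbf{The upper sign is only named.} Your proposal gives neither $X_\ell$, nor the nine-vertex graph, nor any certificate. It is a proof only as a citation of Theorem~\ref{thm:positive} and Propositions~\ref{prop:H} and~\ref{prop:G}, which is exactly how the paper's own proof concludes.
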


\section{Universal support transfer and sharp edge spread}

Let \(X\) be a finite simple graph.
At a forcing stage, let \(B\) be the blue set and let \(C\) be a component of \(X-B\).
A blue vertex \(x\) may PSD-force \(y\in C\) when
\[
  N_X(x)\cap C=\set{y}.
\]
A leak is a vertex forbidden to perform a force.
A set is \(\ell\)-leaky PSD forcing if it forces all vertices for every set of \(\ell\) leaks.
Its minimum size is \(Z^+_{(\ell)}(X)\).
The standard rule replaces the component \(C\) by the full white set; its leaky number is \(Z_{(\ell)}(X)\).

\begin{lemma}[Fixed-leak monotonicity]\label{lem:monotone}
Fix a graph, a leak set \(L\), and either the standard or PSD rule.
If \(B\) forces the graph while no vertex of \(L\) performs a force, then every superset of \(B\) does so.
\end{lemma}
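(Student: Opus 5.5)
We would prove Lemma~\ref{lem:monotone} by simulation: run the given forcing process for \(B\), and show step by step that the superset \(B'\supseteq B\) can keep up. Fix the leak set \(L\) and a chronological list of forces \(x_1\to y_1,\dots,x_m\to y_m\) that turns all of \(V\) blue from \(B\). No \(x_i\) lies in \(L\). Let \(B_0=B\) and \(B_i=B_{i-1}\cup\set{y_i}\). We build a sequence \(B'_0=B'\) and \(B'_i\) for the superset and maintain the invariant \(B_i\subseteq B'_i\).

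For the inductive step, assume \(B_{i-1}\subseteq B'_{i-1}\). If \(y_i\in B'_{i-1}\), we perform no force and set \(B'_i=B'_{i-1}\). Otherwise \(x_i\in B_{i-1}\subseteq B'_{i-1}\) is blue and is not a leak, and we check that \(x_i\) may legally force \(y_i\) from \(B'_{i-1}\).

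\emph{Standard rule.} The white set \(W'=V\setminus B'_{i-1}\) is contained in \(W=V\setminus B_{i-1}\). The force was legal, so \(N(x_i)\cap W=\set{y_i}\). Since \(y_i\in W'\), this gives \(N(x_i)\cap W'=\set{y_i}\).

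\emph{PSD rule.} Let \(C\) be the component of \(G-B_{i-1}\) containing \(y_i\), and let \(C'\) be the component of \(G-B'_{i-1}\) containing \(y_i\). The graph \(G-B'_{i-1}\) is an induced subgraph of \(G-B_{i-1}\), so \(C'\subseteq C\). From \(N(x_i)\cap C=\set{y_i}\) and \(y_i\in C'\) we get \(N(x_i)\cap C'=\set{y_i}\). So the force is legal, and \(B'_i=B'_{i-1}\cup\set{y_i}\supseteq B_i\).

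At the end \(B'_m\supseteq B_m=V\), and every force used was performed by some \(x_i\notin L\). Hence \(B'\) forces the graph with no leak forcing.

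The only point needing care is the PSD component containment \(C'\subseteq C\). It holds because deleting more vertices only splits components. If the paper's definition of forcing uses simultaneous or maximal rounds rather than single forces, we would first note that the final (closure) set is order-independent, so the chronological single-force formulation above suffices.
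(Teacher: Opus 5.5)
Your proposal is correct and follows the paper's own proof. Both replay the forcing sequence from the superset, skip forces whose targets are already blue, and use that the white set (standard rule) or the white component containing the target (PSD rule) only shrinks, so each retained force stays legal and no leak becomes a source; your invariant \(B_i\subseteq B'_i\) just spells out the induction the paper leaves implicit.
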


\begin{proof}
Start with a forcing sequence from \(B\).
From a larger blue set, skip each force whose target is already blue.
For every retained PSD force, the current white component containing its target is contained in the former component.
The source still has that target as its unique neighbor in the component.
The standard case is the same with one white set.
No source in \(L\) is introduced.
\end{proof}

\begin{theorem}[Blue-support transfer]\label{thm:support}
Let \(X\) and \(Y\) be graphs on the same vertex set, and let \(S\subseteq V(X)\) satisfy
\[
  E(X)\mathbin{\triangle}E(Y)\subseteq \binom{S}{2}.
\]
For either the standard or PSD rule, if \(B\) forces \(X\) with a fixed leak set \(L\), then \(B\cup S\) forces \(Y\) with the same leaks.
Consequently, for every feasible \(\ell\),
\begin{equation}\label{eq:support-bound}
  \bigl|\zleak{\ell}{X}-\zleak{\ell}{Y}\bigr|\le |S|,
\end{equation}
and the analogous bound holds for \(Z_{(\ell)}\).
\end{theorem}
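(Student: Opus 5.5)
Start from a forcing sequence for \(X\) from \(B\) with leaks \(L\), say forces \(x_1\to y_1,\dots,x_m\to y_m\) with no \(x_i\in L\). In \(Y\), begin with the larger blue set \(B\cup S\). Replay the sequence in order, skipping every force whose target is already blue. In particular, every force with target in \(S\) is skipped, because \(S\) is blue from the start.

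The first step is to check that each retained force \(x_i\to y_i\) is valid in \(Y\). Here \(y_i\notin S\), so no edited edge is incident to \(y_i\).

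If \(x_i\notin S\), then \(x_i\) is incident to no edited edge, so \(N_Y(x_i)=N_X(x_i)\).

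If \(x_i\in S\), its neighborhood may change, but only on vertices of \(S\). Since all of \(S\) is blue in the replay, \(N_Y(x_i)\) and \(N_X(x_i)\) agree on the white vertices.

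In both cases the white neighbors of \(x_i\) are the same in \(X\) and \(Y\) at the current stage. Throughout the replay, the current blue set \(B'\) contains the set \(B_i\) that was blue in the \(X\)-sequence just before step \(i\), together with \(S\).

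For the standard rule, \(y_i\) is the unique white \(X\)-neighbor of \(x_i\) relative to \(B_i\). It therefore remains the unique white neighbor relative to \(B'\supseteq B_i\), and this holds in \(Y\) by the agreement just noted.

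For the PSD rule, the white set \(V\setminus B'\) avoids \(S\). The graphs \(X-B'\) and \(Y-B'\) are then identical, since every edited edge has both endpoints in \(S\subseteq B'\). So the component \(C'\) of \(Y-B'\) containing \(y_i\) equals the component of \(X-B'\) containing \(y_i\). That component lies inside the component \(C\) of \(X-B_i\) containing \(y_i\), as in Lemma~\ref{lem:monotone}. Hence
\[
N_Y(x_i)\cap C' = N_X(x_i)\cap C'\subseteq N_X(x_i)\cap C=\set{y_i},
\]
and \(y_i\in C'\), so the force is legal. No new sources are introduced, so no vertex of \(L\) forces. At the end every \(y_i\) is blue, and \(B\cup S\) forces \(Y\).

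The step I expect to need the most care is exactly this PSD component comparison. The key observation is that removing the blue set, which contains \(S\), erases every edit. The equality \(X-B'=Y-B'\) is what makes this work.

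For the numerical bound~\eqref{eq:support-bound}, take a minimum \(\ell\)-leaky PSD forcing set \(B\) of \(X\). For each leak set \(L\) of size \(\ell\), \(B\) forces \(X\) with leaks \(L\). By the transfer just proved, \(B\cup S\) forces \(Y\) with the same leaks. Therefore \(\zleak{\ell}{Y}\le |B\cup S|\le\zleak{\ell}{X}+|S|\).

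The hypothesis is symmetric in \(X\) and \(Y\), so the reverse inequality follows as well. The standard parameter \(Z_{(\ell)}\) is handled verbatim with the standard rule.
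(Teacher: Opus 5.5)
Your proof is correct and matches the paper's argument. You replay the $X$-sequence from $B\cup S$, skip forces into vertices that are already blue, and use that $S$ stays blue throughout, so $X$ and $Y$ have the same white graph and the same blue-to-white edges. The symmetric bound then follows by applying this to every leak set and swapping $X$ and $Y$. Your explicit component comparison in the PSD case just spells out what the paper gets by citing Lemma~\ref{lem:monotone}.
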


\begin{proof}
Fix \(L\) and a forcing sequence from \(B\) in \(X\) avoiding \(L\).
By Lemma~\ref{lem:monotone}, start instead from \(B\cup S\) and omit forces into vertices already blue.
Every retained target lies outside \(S\).
At each stage, \(X\) and \(Y\) have the same white graph and the same edges from a blue vertex to a white vertex.
Thus every retained force is legal in \(Y\).
This proves the fixed-leak statement.
Apply it to every \(L\), choose a minimum set for \(X\), and reverse \(X,Y\) to obtain \eqref{eq:support-bound}.
\end{proof}

Taking \(S\) to be the two endpoints of one edited edge gives the following result.

\begin{corollary}[Universal edge spread]\label{cor:edge}
For every \(e\in E(G)\) and every feasible \(\ell\),
\[
  -2\le \zleak{\ell}{G}-\zleak{\ell}{G-e}\le2.
\]
The same inequality holds with \(Z_{(\ell)}\) in place of \(Z^+_{(\ell)}\).
For \(\ell=0\), the known sharp PSD bound is one \cite{Ekstrand2013}.
\end{corollary}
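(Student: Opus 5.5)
The plan is to specialize Theorem~\ref{thm:support} to a single edge. Write \(e=uv\), and set \(X=G\), \(Y=G-e\) on the common vertex set \(V(G)\), with \(S=\{u,v\}\). Then
\[
  E(X)\mathbin{\triangle}E(Y)=\{uv\}\subseteq\binom{S}{2},
\]
so the hypothesis of the theorem holds. The fixed-leak transfer works for every leak set \(L\) of size \(\ell\), since the two graphs share the same vertex set and hence the same feasible range of \(\ell\). It therefore yields \eqref{eq:support-bound} with \(|S|=2\) for both \(Z^+_{(\ell)}\) and \(Z_{(\ell)}\), which is exactly the two-sided inequality.

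For completeness, I would spell out why both signs follow. Let \(B\) be a minimum \(\ell\)-leaky PSD forcing set of \(G\). For each leak set \(L\), the set \(B\) forces \(G\) with no source in \(L\). By the theorem, \(B\cup\{u,v\}\) then forces \(G-e\) with the same leaks. Hence \(\zleak{\ell}{G-e}\le\zleak{\ell}{G}+2\). Exchanging the roles of \(X\) and \(Y\) gives the reverse inequality. The standard rule is handled verbatim, since both Lemma~\ref{lem:monotone} and Theorem~\ref{thm:support} are stated for either rule.

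The last sentence concerns \(\ell=0\). There \(Z^+_{(0)}=Z^+\) is the ordinary PSD zero forcing number, and its edge-spread bound \(|Z^+(G)-Z^+(G-e)|\le1\) is cited from \cite{Ekstrand2013} rather than reproved. The only check needed is that \(0\)-leaky PSD forcing coincides with ordinary PSD forcing: the empty leak set is the only one, so the definitions agree.

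I expect no real obstacle here, since the work is done in Theorem~\ref{thm:support}. The one point to watch is the bookkeeping that "feasible \(\ell\)" means the same thing for \(G\) and \(G-e\). It does, because the vertex sets agree, so the minima on both sides are taken over the same leak sets.
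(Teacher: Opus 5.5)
Your proposal is correct and matches the paper's argument: the corollary is exactly Theorem~\ref{thm:support} applied with \(X=G\), \(Y=G-e\), and \(S=\{u,v\}\) (so \(|S|=2\)) under either rule, with the \(\ell=0\) PSD bound of one cited rather than reproved. Your explicit derivation of both signs, transferring \(B\cup\{u,v\}\) from a minimum set \(B\) and then swapping the roles of \(X\) and \(Y\), is the same step that closes the theorem's own proof.
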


There is also a useful one-sided refinement.
Let \(H=G-uv\), and let \(\mathcal M^+_{\ell}(H)\) be the family of minimum \(\ell\)-leaky PSD forcing sets of \(H\).
Define
\[
  r_H(u,v)=\max_{B\in\mathcal M^+_{\ell}(H)}|B\cap\set{u,v}|.
\]

\begin{corollary}[Endpoint profile]\label{cor:endpoint}
With the notation above,
\[
  \zleak{\ell}{G}\le \zleak{\ell}{H}+2-r_H(u,v).
\]
Hence \eqref{eq:conjecture} holds whenever some minimum set of \(H\) contains an endpoint of \(uv\).
If the increase is two, every minimum set of \(H\) avoids both endpoints.
\end{corollary}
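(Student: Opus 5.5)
The plan is to apply the fixed-leak statement of Theorem~\ref{thm:support} with \(X=H\), \(Y=G\), and a single minimum set \(B\) of \(H\) chosen to meet the endpoints as much as possible. Fix \(B\in\mathcal M^+_{\ell}(H)\) with \(|B\cap\set{u,v}|=r_H(u,v)\), and put \(S=\set{u,v}\). The graphs \(H\) and \(G\) differ only in the edge \(uv\), so \(E(H)\mathbin{\triangle}E(G)=\set{uv}\subseteq\binom{S}{2}\).

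First, fix an arbitrary leak set \(L\) with \(|L|=\ell\). Since \(B\) is \(\ell\)-leaky PSD forcing for \(H\), it forces \(H\) with leaks \(L\). Theorem~\ref{thm:support} then gives that \(B\cup S\) forces \(G\) with the same leaks. This holds for every \(L\), so \(B\cup S\) is an \(\ell\)-leaky PSD forcing set of \(G\). Counting sizes,
\[
  \zleak{\ell}{G}\le |B\cup S| = |B|+|S|-|B\cap S| = \zleak{\ell}{H}+2-r_H(u,v).
\]

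The two consequences follow by reading off cases. If some minimum set of \(H\) contains an endpoint of \(uv\), then \(r_H(u,v)\ge1\), and the bound becomes \eqref{eq:conjecture}. If instead \(\zleak{\ell}{G}=\zleak{\ell}{H}+2\), the inequality forces \(r_H(u,v)=0\); that is, no minimum set of \(H\) meets \(\set{u,v}\).

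The only point needing care is that Theorem~\ref{thm:support} is applied with a single fixed \(B\) uniformly over all leak sets. Its hypothesis is the fixed-leak version, and \(B\) is independent of \(L\), so no obstacle arises. Lemma~\ref{lem:monotone} is already built into that theorem. The argument is essentially bookkeeping on top of the support transfer.
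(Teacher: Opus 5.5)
Your proposal is correct and follows the paper's argument: take a minimum set of \(H\) attaining \(r_H(u,v)\), transfer \(B\cup\set{u,v}\) to \(G\) with the fixed-leak form of Theorem~\ref{thm:support}, and count \(|B\cup\set{u,v}|=\zleak{\ell}{H}+2-r_H(u,v)\). Your explicit derivation of the two consequences (\(r_H(u,v)\ge1\) gives the \(+1\) bound, and an increase of two forces \(r_H(u,v)=0\)) spells out what the paper leaves implicit.
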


\begin{proof}
Choose \(B\in\mathcal M^+_{\ell}(H)\) attaining \(r_H(u,v)\).
Theorem~\ref{thm:support} transfers \(B\cup\set{u,v}\) to \(G\), and this set has the stated size.
\end{proof}

For the standard rule, Corollary~\ref{cor:edge} recovers Theorem~5.2 of Bjorkman et al. \cite{Bjorkman2025}.
Their proof uses standard leaky forts.
The sequence-transfer proof above also covers the PSD rule and any collection of edge edits supported on \(S\).

\section{Universal negative equality}

We first record the low-degree obstruction from standard leaky forcing \cite[Lemma~2.4]{Alameda2024}.
The same proof applies to the PSD rule.

\begin{lemma}[Low-degree obstruction]\label{lem:degree}
For either rule, every vertex of degree at most \(\ell\) belongs to every \(\ell\)-leaky forcing set.
\end{lemma}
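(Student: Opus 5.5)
We argue by contradiction, placing the leaks at the low-degree vertex and at all of its neighbours. Let \(v\) have degree \(d\le\ell\), and suppose \(B\) is an \(\ell\)-leaky forcing set (for either rule) with \(v\notin B\). Take the leak set \(L=N(v)\). It has at most \(\ell\) elements, and if \(d<\ell\) we enlarge it arbitrarily to exactly \(\ell\) leaks (when \(\ell\le|V|\)). By Lemma~\ref{lem:monotone}, adding further leaks can only shrink the set of legal forcing sequences, so it suffices to show that \(v\) is never forced when all of \(N(v)\) is leaked.

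The key step is the observation that any force with target \(v\) must have a source \(x\) that is adjacent to \(v\). Under the standard rule this holds because \(x\) forces \(y\) only when \(y\) is \(x\)'s unique white neighbour. Under the PSD rule \(x\) forces \(y\in C\) only when \(N(x)\cap C=\set{y}\), so again \(y\in N(x)\). Every potential source for \(v\) therefore lies in \(N(v)=L\) and is forbidden to force. Since \(v\) is not initially blue, it stays white throughout, so \(B\) fails to force the graph under the leak set \(L\). This contradicts the assumption that \(B\) is \(\ell\)-leaky forcing, so \(v\) belongs to every such set.

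The only delicate point is the bookkeeping when \(d<\ell\): we must still use exactly \(\ell\) leaks, and possibly fewer than \(\ell\) vertices remain outside \(N(v)\). We handle this by adding extra leaks at any other vertices, or at \(v\) itself; a leak at \(v\) is harmless because it only restricts \(v\) as a source. Under the convention that a set of at most \(\ell\) leaks is allowed, this step disappears entirely. The isolated case \(d=0\) is immediate: \(v\) has no neighbours, so nothing can ever force it.
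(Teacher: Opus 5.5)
Your proof is correct and matches the paper's argument: choose an \(\ell\)-set of leaks containing \(N(v)\), observe that under either rule any force into \(v\) must come from a neighbour, and conclude that \(v\) stays white. One small correction: Lemma~\ref{lem:monotone} is about enlarging the blue set, not the leak set, so it does not justify padding the leaks; no justification is needed, because your argument already applies directly to any leak set containing \(N(v)\).
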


\begin{proof}
If such a vertex \(x\) were initially white, choose an \(\ell\)-set of leaks containing \(N(x)\).
No neighbor could force \(x\).
\end{proof}

\begin{theorem}[Uniform negative equality]\label{thm:uniform}
For every integer \(\ell\ge1\), let \(H_\ell\) be the disjoint union of two copies of \(K_{\ell+1}\).
Choose one vertex in each copy, call them \(u\) and \(v\), and set \(G_\ell=H_\ell+uv\).
Then, for both the PSD and standard rules,
\[
  Z^+_{(\ell)}(G_\ell)=Z_{(\ell)}(G_\ell)=2\ell,
  \qquad
  Z^+_{(\ell)}(H_\ell)=Z_{(\ell)}(H_\ell)=2\ell+2.
\]
Thus deleting the bridge \(uv\) gives equality at \(-2\) for every positive leak level.
\end{theorem}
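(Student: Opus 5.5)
The plan is to get every lower bound from Lemma~\ref{lem:degree} and every upper bound from an explicit forcing set, treating both rules at once. Write the two cliques as \(Q_u\ni u\) and \(Q_v\ni v\), and put \(A=Q_u\setminus\set{u}\) and \(A'=Q_v\setminus\set{v}\). Then \(|A|=|A'|=\ell\).

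\emph{The graph \(H_\ell\).} Every vertex of \(H_\ell\) has degree exactly \(\ell\). By Lemma~\ref{lem:degree}, every \(\ell\)-leaky forcing set under either rule contains all \(2\ell+2\) vertices. The full vertex set is trivially forcing, so \(Z^+_{(\ell)}(H_\ell)=Z_{(\ell)}(H_\ell)=2\ell+2\).

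\emph{Lower bound for \(G_\ell\).} In \(G_\ell\) the vertices of \(A\cup A'\) still have degree \(\ell\), while \(u\) and \(v\) have degree \(\ell+1\). Lemma~\ref{lem:degree} therefore forces \(A\cup A'\) into every \(\ell\)-leaky forcing set for either rule. This gives \(Z^+_{(\ell)}(G_\ell)\ge 2\ell\) and \(Z_{(\ell)}(G_\ell)\ge2\ell\).

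\emph{Upper bound for \(G_\ell\).} I will show that \(B=A\cup A'\) is \(\ell\)-leaky forcing under both rules. Fix a leak set \(L\) with \(|L|=\ell\). Initially the white set is \(\set{u,v}\), and it is a single component of \(G_\ell-B\) because \(uv\) is an edge. So the standard rule and the PSD rule see the same white set here.
\begin{itemize}
\item Each \(w\in A\) has \(N(w)\cap\set{u,v}=\set{u}\), so any non-leak \(w\in A\) may force \(u\) under either rule.
\item Symmetrically, any non-leak vertex of \(A'\) may force \(v\).
\item Once one of \(u,v\) is blue, say \(u\), the only remaining white vertex is \(v\). Its singleton component is also the whole white set, and \(N(u)\cap\set{v}=\set{v}\), so \(u\) forces \(v\) provided \(u\notin L\).
\end{itemize}
There are two cases.
\begin{itemize}
\item If \(L\) misses at least one vertex of \(A\) and at least one vertex of \(A'\), both \(u\) and \(v\) are forced from \(B\).
\item Otherwise one side, say \(A\), lies entirely in \(L\). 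Since \(|A|=\ell=|L|\), we get \(L=A\). Then \(A'\) contains no leaks, because \(\ell\ge1\) makes \(A'\) nonempty and disjoint from \(L\), and neither \(v\) nor \(u\) is a leak. So a vertex of \(A'\) forces \(v\), and then \(v\) forces \(u\).
\end{itemize}
Hence \(B\) forces \(G_\ell\) for every \(L\), and \(Z^+_{(\ell)}(G_\ell)=Z_{(\ell)}(G_\ell)=2\ell\).

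Comparing the two graphs gives \(Z^+_{(\ell)}(G_\ell)-Z^+_{(\ell)}(H_\ell)=-2\), and likewise for the standard rule, with \(G_\ell\) connected.

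The only delicate point is the fully-leaked-side case. There one must use \(|L|=\ell=|A|\) to conclude that \(u\) and \(v\) are not leaks, so the bridge can relay the force across. One must also note that the PSD component condition coincides with the standard condition at each step, because the white set is always connected.
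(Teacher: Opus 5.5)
Your proposal is correct and follows the paper's proof essentially step for step. It uses Lemma~\ref{lem:degree} for both lower bounds and \(A\cup A'\) as the forcing set, with the same case split: if one side is entirely leaked, then \(L\) equals that side, and the force is relayed across the bridge. Your remark that the white set is always \(\set{u,v}\) or a singleton, so the PSD and standard conditions coincide, simply spells out the paper's closing sentence that the forces are legal under both rules.
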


\begin{proof}
Every vertex of \(H_\ell\) has degree \(\ell\), so Lemma~\ref{lem:degree} forces the whole vertex set to be blue initially.

Let \(A\) and \(C\) be the vertices different from \(u\) and \(v\) in the first and second clique, respectively.
Every vertex of \(A\cup C\) has degree \(\ell\) in \(G_\ell\).
Hence every \(\ell\)-leaky forcing set has size at least \(2\ell\).
Start with \(A\cup C\) blue and fix an \(\ell\)-set \(L\) of leaks.
If neither \(A\) nor \(C\) is contained in \(L\), an unfailed vertex of each set forces its adjacent white endpoint.
If \(A\subseteq L\), then \(L=A\); any vertex of \(C\) forces \(v\), and then \(v\) forces \(u\).
The case \(C\subseteq L\) is symmetric.
These forces are legal under both rules.
\end{proof}

\section{Connected transversals and universal positive equality}

Elias et al., Definition~32 and Theorem~34, characterize leaky PSD forcing sets as transversals of a possibly disconnected fort family \cite{Elias2025}.
The next theorem gives an equivalent connected reduction.
It simplifies their formulation by discarding redundant disconnected unions, replacing the componentwise fort condition with \(\lvert\sbd XF\rvert\le\ell\), and identifying the parameter with the resulting transversal number.

For nonempty connected \(F\subseteq V(X)\), define its singleton boundary by
\[
  \sbd XF=\set{x\in V(X)\setminus F:|N_X(x)\cap F|=1}.
\]

\begin{theorem}[Connected-fort transversal]\label{thm:fort}
A set \(B\subseteq V(X)\) is \(\ell\)-leaky PSD forcing if and only if it meets every nonempty connected set \(F\) satisfying \(\lvert\sbd XF\rvert\le\ell\).
Consequently, \(Z^+_{(\ell)}(X)\) is the minimum transversal number of this family.
\end{theorem}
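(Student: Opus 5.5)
We prove the two directions separately. In both, the key observation is that a PSD force from a blue vertex \(x\) into a white component \(C\) needs exactly one neighbor of \(x\) in \(C\). So a connected white set \(F\) that is a union of white components can only be entered by vertices of \(\sbd XF\).

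\emph{Necessity.} Suppose \(B\) misses some nonempty connected \(F\) with \(\lvert\sbd XF\rvert\le\ell\). Choose a leak set \(L\supseteq\sbd XF\) of size \(\ell\); this is possible since \(\ell\le|V(X)|\). We claim no vertex of \(F\) ever turns blue.

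Suppose otherwise, and consider the first force into \(F\). Just before it, all of \(F\) is white. Since \(F\) is connected, \(F\) lies inside a single white component \(C\). The force is \(x\to y\) with \(y\in F\), \(x\) blue, and \(N(x)\cap C=\set{y}\).

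Then \(x\notin F\), and \(x\) has exactly one neighbor in \(F\). Indeed, \(x\) has at least one neighbor in \(F\), namely \(y\). Any other neighbor of \(x\) in \(F\) would lie in \(C\), contradicting \(N(x)\cap C=\set{y}\). Hence \(x\in\sbd XF\subseteq L\), which contradicts the leak rule.

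So \(B\) fails to force \(X\) for this \(L\).

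\emph{Sufficiency.} Suppose \(B\) meets every such \(F\). Fix any \(\ell\)-set \(L\) and run the PSD rule with leaks \(L\) until it stalls, with final blue set \(B'\supseteq B\). Assume \(B'\ne V(X)\) and let \(C\) be a component of \(X-B'\). Then \(C\) is nonempty and connected, and \(C\cap B=\emptyset\).

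Every \(x\in\sbd XC\) lies outside \(C\) and is adjacent to \(C\). Since \(C\) is a component of the white graph, \(x\) is blue, and \(N(x)\cap C\) is a singleton. So \(x\) could force unless \(x\in L\). Because the process stalled, \(\sbd XC\subseteq L\), hence \(\lvert\sbd XC\rvert\le\ell\). Thus \(C\) is a set in the family that is missed by \(B\), a contradiction.

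Therefore \(B\) forces \(X\) for every \(L\). The standard order-independence of the closure, as used implicitly via Lemma~\ref{lem:monotone}, guarantees that stalling means no legal force exists, so the argument does not depend on the order of forces.

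\emph{Consequence and main difficulty.} The transversal statement follows immediately, since \(Z^+_{(\ell)}(X)\) is the minimum size of such a \(B\). The main point to handle carefully is the necessity step. There \(F\) need not itself be a white component, and the argument works only because connectivity of \(F\) places it inside one component \(C\). That in turn converts the PSD uniqueness condition on \(C\) into the singleton-boundary condition on \(F\).
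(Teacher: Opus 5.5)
Your proof is correct and follows the paper's argument. For necessity you put the leaks on $\sbd XF$ and look at the first force into $F$; you also make explicit that connectivity places $F$ inside a single white component, which the paper leaves implicit. For sufficiency you take a component of the stalled white graph as the missed fort. Your closing appeal to order-independence is unnecessary, since you have already shown that every maximal run ends with all vertices blue.
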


\begin{proof}
First suppose that \(B\cap F=\varnothing\) and choose an \(\ell\)-set of leaks containing \(\sbd XF\).
Before the first force into \(F\), every possible source belongs to \(\sbd XF\), hence is a leak.
Thus \(B\) is not \(\ell\)-leaky PSD forcing.

Conversely, fix \(\ell\) leaks and suppose the process from \(B\) stops with white vertices.
Let \(F\) be a component of the final white graph.
Every vertex in \(\sbd XF\) is blue; if it were not a leak, it could force its unique neighbor in \(F\).
Hence \(\sbd XF\) is contained in the leak set, so \(\lvert\sbd XF\rvert\le\ell\), while \(B\cap F=\varnothing\).
\end{proof}

We call the sets in Theorem~\ref{thm:fort} connected \(\ell\)-leaky PSD forts.

\begin{theorem}[Uniform positive equality]\label{thm:positive}
Fix \(\ell\ge2\), put \(a=\lfloor\ell/2\rfloor\), \(b=\lceil\ell/2\rceil\), and \(q=b+3\).
Start with a clique \(Q=K_q\) containing distinct vertices \(u,v,w\).
Attach \(a\) leaves to each of \(u\) and \(v\), and attach \(b\) leaves to \(w\).
Call the resulting graph \(X_\ell\), and set \(Y_\ell=X_\ell-uv\).
Then both graphs are connected, have order \(2\ell+3\), and satisfy
\[
  Z^+_{(\ell)}(X_\ell)=2\ell+2,
  \qquad
  Z^+_{(\ell)}(Y_\ell)=2\ell.
\]
Thus equality at \(+2\) occurs for every \(\ell\ge2\).
\end{theorem}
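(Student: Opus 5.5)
The plan is a direct computation of both parameters. The lower bounds come from Lemma~\ref{lem:degree} together with explicit small connected forts as in Theorem~\ref{thm:fort}; the upper bounds come from exhibiting a set and checking, via Theorem~\ref{thm:fort}, that it meets every connected \(\ell\)-leaky PSD fort.

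\emph{Bookkeeping.} First count vertices. There are \(q=b+3\) clique vertices and \(2a+b=\ell+a\) leaves, so the order is \(2a+2b+3=2\ell+3\). Both graphs are connected: removing \(uv\) leaves \(Q-uv\) connected because \(q\ge4\). Let \(I=V(Q)\setminus\set{u,v,w}\), so \(|I|=b\). Every leaf has degree \(1\le\ell\), so by Lemma~\ref{lem:degree} every \(\ell\)-leaky PSD forcing set of \(X_\ell\) or \(Y_\ell\) contains all \(\ell+a\) leaves. Hence only clique vertices can be initially white. Record the degrees: \(\deg_{X}(u)=q-1+a=\ell+2\), \(\deg_Y(u)=\ell+1\) (and likewise for \(v\)), and \(\deg(w)=2b+2>\ell\).

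\emph{The graph \(X_\ell\).} For the upper bound, take \(B=V(X_\ell)\setminus\set{u}\). The only nonempty connected set avoiding \(B\) is \(\set{u}\), whose singleton boundary is \(N(u)\), of size \(\ell+2>\ell\). By Theorem~\ref{thm:fort}, \(B\) is forcing. For the lower bound, suppose some forcing set leaves two clique vertices \(x,y\) white. Then \(F=\set{x,y}\) is connected, since \(x,y\) are adjacent in \(Q\). Every other clique vertex is adjacent to both, so \(\sbd XF\) consists only of leaves of \(x\) and \(y\). Its size is at most \(a+b=\ell\), so by Theorem~\ref{thm:fort} the set is not forcing. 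Hence \(Z^+_{(\ell)}(X_\ell)=2\ell+2\).

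\emph{The graph \(Y_\ell\).} For the upper bound, take \(B=\) all leaves \(\cup\, I\), of size \(\ell+a+b=2\ell\). The connected subsets of \(\set{u,v,w}\) in \(Y_\ell\) are \(\set{u},\set{v},\set{w},\set{u,w},\set{v,w},\set{u,v,w}\), and I will check each has singleton boundary larger than \(\ell\).
\begin{itemize}
\item The singletons have boundary equal to their neighborhoods, of sizes \(\ell+1\) and \(2b+2\).
\item For \(\set{u,w}\): the boundary is the \(a\) leaves of \(u\), the \(b\) leaves of \(w\), and \(v\), which is adjacent to \(w\) but not \(u\). This gives \(\ell+1\), since vertices of \(I\) see both. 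The case \(\set{v,w}\) is symmetric.
\item For \(\set{u,v,w}\): the boundary is its \(\ell+a\) leaves, since \(I\) sees all three. This exceeds \(\ell\) because \(a\ge1\).
\end{itemize}

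For the lower bound, any set of size at most \(2\ell-1\) containing all leaves leaves at least \(4\) clique vertices white, so at least one white vertex \(z\in I\).
\begin{itemize}
\item If another vertex of \(I\) is white, take \(F\) to be these two vertices of \(I\). Then \(F\) is connected and \(\sbd YF=\varnothing\).
\item Otherwise \(u,v,w\) are all white. Take \(F=\set{z,w}\): \(u\) and \(v\) each see both \(z\) and \(w\), so \(\sbd YF\) is the \(b\le\ell\) leaves of \(w\).
\end{itemize}
In either case Theorem~\ref{thm:fort} shows the set fails, so \(Z^+_{(\ell)}(Y_\ell)=2\ell\).

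The main obstacle is the careful enumeration of singleton boundaries for the forts inside \(\set{u,v,w}\) in \(Y_\ell\). The whole construction hinges on \(v\) entering \(\sbd Y{\set{u,w}}\) precisely because \(uv\) was deleted, which pushes that boundary to \(\ell+1\). It is also where the choices \(a=\lfloor\ell/2\rfloor\), \(b=\lceil\ell/2\rceil\) and \(\ell\ge2\), which guarantees \(a\ge1\), are used.
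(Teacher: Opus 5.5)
Your proof is correct, but it certifies the two upper bounds differently from the paper.

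\textbf{Upper bounds.} The paper uses the same two sets you chose and checks them by simulating the leaky process directly.
\begin{itemize}
\item For $X_\ell$: either an unfailed leaf of $u$, or one of the $b+2$ blue core neighbors of $u$, forces $u$.
\item For $Y_\ell$: the paper does a case analysis on which leaf groups consist entirely of leaks. It writes out explicit force chains, such as $v\to w\to u$ when all leaves of $u$ and $w$ are leaks.
\end{itemize}
You instead use the sufficiency direction of Theorem~\ref{thm:fort}. Since the white set is at most $\set{u,v,w}$, it is enough to list its connected subsets in $Y_\ell$ and check that each singleton boundary exceeds $\ell$.

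\textbf{What each route buys.} Your version replaces the leak-placement case analysis with six boundary computations. It also shows exactly where the deleted edge matters, namely $v\in\sbd{Y_\ell}{\set{u,w}}$, which pushes that boundary to $\ell+1$. This is the fort-side counterpart of the paper's chain $v\to w\to u$. The paper's version gives an explicit forcing strategy for every leak placement, and it uses the fort characterization only for the lower bounds.

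\textbf{Lower bounds.} These match the paper's except in the final case. There you use $\set{z,w}$, whose boundary is the $b$ leaves of $w$. The paper uses $\set{x,u,v}$, whose boundary is the $2a$ leaves of $u$ and $v$. Both choices are valid.
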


\begin{proof}
Let \(P_u,P_v,P_w\) be the three leaf groups, and let \(P\) be their union.
Every leaf belongs to every \(\ell\)-leaky set by Lemma~\ref{lem:degree}.

For \(X_\ell\), start from \(P\cup(Q\setminus\set{u})\).
If some leaf in \(P_u\) is not a leak, it forces \(u\).
Otherwise all \(a\) leaves in \(P_u\) are leaks, leaving at most \(b\) further leaks.
Since \(u\) has \(q-1=b+2\) blue core neighbors, one of them forces \(u\).
This gives \(Z^+_{(\ell)}(X_\ell)\le |V(X_\ell)|-1\).
If two core vertices \(x,y\) were initially white, then \(F=\set{x,y}\) would be connected and \(\lvert\sbd{X_\ell}F\rvert\le a+b=\ell\).
Theorem~\ref{thm:fort} gives the reverse inequality.

For \(Y_\ell\), start from \(P\cup(Q\setminus\set{u,v,w})\).
Call a leaf group blocked when all its leaves are leaks.
All three groups cannot be blocked because their total size is \(2a+b>\ell\).
If no group is blocked, the leaves force \(u,v,w\).
If exactly one group is blocked, the other two core vertices are forced first; the blocked vertex then has more blue core neighbors than remaining leaks.
If \(P_u,P_w\) are blocked, all \(a+b=\ell\) leaks lie there; after a leaf forces \(v\), the forces \(v\to w\to u\) finish the graph.
The case \(P_v,P_w\) is symmetric.
If \(P_u,P_v\) are blocked, at most \(b-a\le1\) leak remains; after \(w\) becomes blue, a nonleak in \(Q\setminus\set{u,v}\) forces both singleton white components.
Thus \(Z^+_{(\ell)}(Y_\ell)\le |V(Y_\ell)|-3\).

For the lower bound, suppose at least four core vertices are white.
If two unmarked core vertices \(x,y\) are white, then \(\set{x,y}\) is a connected fort with empty singleton boundary.
Otherwise all of \(u,v,w\) and one unmarked vertex \(x\) are white.
The set \(\set{x,u,v}\) is connected in \(Y_\ell\) and has singleton boundary \(P_u\cup P_v\), of size \(2a\le\ell\).
Theorem~\ref{thm:fort} completes the proof.
\end{proof}

\section{The one-leak extremal pair}

Let \(V(H)=\set{0,1,\ldots,8}\) and
\begin{align}\label{eq:edges}
E(H)=\set{&02,04,07,08,13,14,16,17,\notag\\
           &23,25,27,35,36,46,47,56}.
\end{align}
Here \(ij\) denotes the edge \(\set{i,j}\).
Set
\[
  G=H+01,\qquad e=01.
\]
Both graphs are connected.
Figure~\ref{fig:graph} displays \(G\); deleting the red edge gives \(H\).

\begin{figure}[htbp]
\centering
\begin{tikzpicture}[
  scale=1.02,
  every node/.style={circle,draw,fill=white,inner sep=1.7pt,font=\small},
  baseedge/.style={draw=black!72,line width=0.55pt}
]
  \coordinate (p0) at (-2.25,1.55);
  \coordinate (p1) at ( 2.10,1.55);
  \coordinate (p2) at (-2.35,-0.15);
  \coordinate (p3) at ( 1.45,-0.35);
  \coordinate (p4) at (-0.35,1.20);
  \coordinate (p5) at (-0.85,-1.65);
  \coordinate (p6) at ( 0.95,-1.55);
  \coordinate (p7) at ( 0.15,0.15);
  \coordinate (p8) at (-3.55,2.45);
  \foreach \a/\b in {0/2,0/4,0/7,0/8,1/3,1/4,1/6,1/7,2/3,2/5,2/7,3/5,3/6,4/6,4/7,5/6}
    \draw[baseedge] (p\a)--(p\b);
  \draw[red!75!black,line width=1.25pt] (p0)--(p1);
  \foreach \v in {0,1,2,3,4,5,6,7,8}
    \node at (p\v) {\v};
  \node[draw=none,rectangle,fill=none,text=red!75!black,anchor=west] at (2.55,0.95) {added edge \(01\)};
\end{tikzpicture}
\caption{The graph \(G\). The red edge is \(e=01\).}
\label{fig:graph}
\end{figure}

\subsection{The deleted-edge graph}

\begin{proposition}\label{prop:H}
For the graph \(H\) in \eqref{eq:edges}, \(\zl H=4\).
\end{proposition}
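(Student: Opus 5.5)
The plan is to prove the two inequalities \(\zl H\le 4\) and \(\zl H\ge 4\) separately, both through Theorem~\ref{thm:fort} with \(\ell=1\). In that setting a connected \(1\)-leaky PSD fort is a nonempty connected \(F\) with at most one outside vertex having exactly one neighbour in \(F\).

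First I record the degrees in \(H\). Vertex \(8\) is a leaf attached to \(0\), vertex \(5\) has degree \(3\), and every other vertex has degree \(4\). By Lemma~\ref{lem:degree}, vertex \(8\) lies in every \(1\)-leaky set, so every candidate set contains \(8\).

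For the upper bound I will exhibit an explicit \(4\)-set \(B=\set{8,x,y,z}\). Corollary~\ref{cor:endpoint} tells us where to look: if \(H\) is to serve the purpose of the section (an increase of two for \(G=H+01\)), every minimum set of \(H\) must avoid \(0\) and \(1\). So I will search among the \(\binom{6}{3}=20\) triples in \(\set{2,\dots,7}\), and I expect a nearly antipodal choice such as \(\set{8,2,6,\cdot}\) to work.

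Once \(B\) is fixed, I certify it directly rather than through forts. For each of the nine possible leaks \(L=\set{t}\), I give a forcing sequence that avoids \(t\) as a source. Most leaks are handled by one generic sequence, since a leak matters only if it is a source there. So only the few leaks that serve as sources in the generic sequence need alternative sequences, which keeps the check to a short table.

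For the lower bound I must show that no \(3\)-set \(\set{8,x,y}\) is \(1\)-leaky PSD forcing. By Theorem~\ref{thm:fort}, this amounts to producing, for each of the \(\binom{8}{2}=28\) pairs \(\set{x,y}\subseteq\set{0,\dots,7}\), a connected fort \(F\) disjoint from \(\set{8,x,y}\) with \(\lvert\sbd HF\rvert\le1\). The natural supply is large connected sets whose complement is small. For instance, \(F=V(H)\setminus\set{8,x,y}\) has singleton boundary contained in \(\set{8,x,y}\). Its size is at most one exactly when at most one of \(8,x,y\) has a unique neighbour in \(F\). Since \(8\) always has its unique neighbour \(0\) in \(F\) when \(0\in F\), this requires both \(x\) and \(y\) to have at least two neighbours in \(F\) (or none), and it requires \(F\) to be connected. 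For pairs where this fails, I will try slightly smaller forts obtained by also deleting a vertex, chosen so that the offending boundary vertices acquire two or zero neighbours in \(F\).

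I will organize the \(28\) cases by whether \(0\in\set{x,y}\), because that changes the status of \(8\). When \(0\) is removed, the vertex \(8\) has no neighbour in \(F\) and drops out of the boundary, which makes these cases easy.

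The main obstacle is this case analysis for the lower bound. It is a finite but delicate verification, and a few pairs (likely those involving the degree-\(3\) vertex \(5\) or two adjacent vertices among \(\set{1,4,6,7}\)) may need a nonobvious fort. For the proof I intend to present the certificates as a table listing each pair and its fort \(F\). I will cite the exact verifier mentioned in the abstract as an independent check, but the table itself is what must be checked by hand.
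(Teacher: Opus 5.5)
Your proposal is correct. Your upper bound follows the paper's method: an explicit set, certified leak by leak. Your guess \(\set{8,2,6,\cdot}\) completes to \(\set{2,3,6,8}\), which is one of exactly two minimum sets; the paper uses the other, \(\set{2,3,5,8}\). Keep the appeal to Corollary~\ref{cor:endpoint} as a search heuristic only, since it presupposes \(\zl G=6\).

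The lower bound is where you genuinely differ. The paper uses only six connected forts, \(\set{8}\), \(\set{3,5}\), \(\set{0,3,4,6}\), \(\set{0,4,5,6}\), \(\set{1,2,4,7}\), \(\set{1,2,6,7}\), with a short hitting-set argument. Vertex \(8\) is forced, a second vertex must be \(3\) or \(5\), and the forts that vertex misses have empty common intersection.

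You instead certify each of the \(28\) pairs separately. In fact your first candidate \(F=V(H)\setminus\set{8,x,y}\) always works, so the fallback forts you anticipate are never needed. Every vertex other than \(8\) has degree at least \(3\), and vertex \(0\) loses only \(8\) and possibly \(y\) from its four neighbours. So each of \(x,y\) keeps at least two neighbours in \(F\). This gives \(\sbd HF\subseteq\set{8}\), with \(\sbd HF=\varnothing\) when \(0\in\set{x,y}\). Moreover, \(F\) is connected because \(H-8\) is \(3\)-connected, which is a direct finite check. Smaller sets are covered because supersets of forcing sets are forcing.

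Your route replaces a hand-picked fort family by a uniform structural reason: minimum degree three away from the pendant vertex, plus \(3\)-connectivity of \(H-8\). It is the same complement-is-a-fort idea the paper itself uses for \(G\) in Proposition~\ref{prop:G}. The paper's route buys a smaller hand-checkable certificate: six boundary computations and one intersection check, rather than a connectivity verification over \(28\) vertex pairs.
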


\begin{proof}
For the upper bound, take
\[
  B_H=\set{2,3,5,8}.
\]
Table~\ref{tab:Hforces} gives a valid PSD forcing sequence for every leak \(\lambda\).
The source of each force differs from \(\lambda\).

\begin{table}[H]
\centering
\caption{Forcing certificates for \(H\).}
\label{tab:Hforces}
\begin{tabularx}{\textwidth}{@{}cY@{}}
\toprule
Leak \(\lambda\) & PSD forcing sequence \\
\midrule
\(0,4,7,8\) & \(5\to6,\ 3\to1,\ 6\to4,\ 1\to7,\ 2\to0\) \\
\(1\) & \(5\to6,\ 3\to1,\ 6\to4,\ 8\to0,\ 0\to7\) \\
\(2\) & \(5\to6,\ 3\to1,\ 6\to4,\ 1\to7,\ 4\to0\) \\
\(3\) & \(5\to6,\ 8\to0,\ 2\to7,\ 0\to4,\ 4\to1\) \\
\(5\) & \(8\to0,\ 2\to7,\ 0\to4,\ 7\to1,\ 1\to6\) \\
\(6\) & \(5\to6,\ 3\to1,\ 8\to0,\ 2\to7,\ 0\to4\) \\
\bottomrule
\end{tabularx}
\end{table}

For the lower bound, Table~\ref{tab:Hforts} lists six connected forts.
The second column gives the complete singleton boundary.

\begin{table}[H]
\centering
\caption{A fort certificate for \(H\).}
\label{tab:Hforts}
\begin{tabular}{@{}cc@{}}
\toprule
Fort \(F\) & \(\sbd HF\) \\
\midrule
\(\set{8}\) & \(\set{0}\) \\
\(\set{3,5}\) & \(\set{1}\) \\
\(\set{0,3,4,6}\) & \(\set{8}\) \\
\(\set{0,4,5,6}\) & \(\set{8}\) \\
\(\set{1,2,4,7}\) & \(\set{5}\) \\
\(\set{1,2,6,7}\) & \(\varnothing\) \\
\bottomrule
\end{tabular}
\end{table}

Every fort transversal contains \(8\).
It needs at least three further vertices to meet the remaining five forts.
Indeed, a two-vertex transversal must choose \(3\) or \(5\).
After choosing \(3\), its other vertex would lie in
\[
 \set{0,4,5,6}\cap\set{1,2,4,7}\cap\set{1,2,6,7}=\varnothing.
\]
After choosing \(5\), replace the first set by \(\set{0,3,4,6}\); the intersection is again empty.
Theorem~\ref{thm:fort} gives \(\zl H\ge4\).
The forcing table gives equality.
\end{proof}

\subsection{The restored-edge graph}

\begin{proposition}\label{prop:G}
For \(G=H+01\), \(\zl G=6\).
\end{proposition}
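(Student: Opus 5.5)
The upper bound comes from the endpoint profile, and the lower bound from a fort certificate checked through Theorem~\ref{thm:fort}, in parallel with the proof of Proposition~\ref{prop:H}.

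For the upper bound, take \(B_H=\set{2,3,5,8}\) from Proposition~\ref{prop:H}. Theorem~\ref{thm:support} with \(S=\set{0,1}\), equivalently Corollary~\ref{cor:endpoint}, shows that
\[
  B_G=B_H\cup\set{0,1}=\set{0,1,2,3,5,8}
\]
is \(1\)-leaky PSD forcing in \(G\). Hence \(\zl G\le 6\). I would still write out a short forcing table for every leak, to match the style of Table~\ref{tab:Hforces}. The transfer argument already guarantees that such a table exists: it is the \(H\)-sequence with forces into \(0\) and \(1\) deleted.

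For the lower bound, I would list connected \(1\)-leaky PSD forts of \(G\), that is, connected sets \(F\) with \(\lvert\sbd GF\rvert\le1\), and show that no \(5\)-set meets all of them. Some forts of \(H\) survive the added edge \(01\). I have checked two:
\begin{itemize}
\item \(\set{8}\) still has boundary \(\set{0}\).
\item \(\set{3,5}\) still has boundary \(\set{1}\): in \(G\), vertex \(1\) meets it only at \(3\), while \(2\) and \(6\) meet it twice.
\end{itemize}
The other four forts of Table~\ref{tab:Hforts} should be re-checked with the new edge. For example, \(\set{1,2,6,7}\) now has \(0\) adjacent to \(1\), \(2\) and \(7\), so \(0\) is not a singleton-boundary vertex. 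Wherever the edge \(01\) creates a new singleton, the fort has to be enlarged or replaced. The key new forts should be those that contain exactly one of \(0,1\) and previously relied on the other endpoint being outside with two or more neighbors inside. Corollary~\ref{cor:endpoint} explains why such forts must exist: an increase of two forces every minimum set of \(H\) to avoid both \(0\) and \(1\).

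Once \(8\) is forced into every transversal, the transversal argument reduces to showing that no \(4\)-subset of \(\set{0,\dots,7}\) meets the remaining forts. I would organise this as a branching argument, as in Proposition~\ref{prop:H}. First branch on which of \(3,5\) is chosen, since \(\set{3,5}\) must be hit. Then show that the forts avoiding that choice need at least three further vertices, using pairwise-disjoint or small-intersection subfamilies, for instance three pairwise disjoint forts inside \(\set{0,1,2,4,6,7}\). This gives \(\zl G\ge 6\).

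The main obstacle is producing a small, verifiable fort family of \(G\) that certifies transversal number \(6\). Doing it by hand requires guessing the forts. What I would try first is the following:
\begin{itemize}
\item take complements of near-minimal candidate forcing sets;
\item for each \(5\)-set containing \(8\) and one of \(3,5\), run the leaky process in \(G\) until it stalls for some leak;
\item record the final white component, which Theorem~\ref{thm:fort} guarantees is a connected fort.
\end{itemize}
Collecting these components and then pruning to an irredundant subfamily should give the certificate table.
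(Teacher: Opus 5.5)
The upper bound is fine and matches the paper's: $B_H\cup\set{0,1}$ transfers to $G$ by Theorem~\ref{thm:support}. The gap is the lower bound, which is the real content of the proposition. You never exhibit the forts, and the route you sketch for obtaining a certificate cannot succeed as written.

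\textbf{The count is off by one.} With $8$ forced and one of $3,5$ chosen, ``at least three further vertices'' gives $1+1+3=5$, i.e.\ only $\zl{G}\ge5$. You would need four further vertices.

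\textbf{Your sample packing does not exist.} Since $8$ is the only vertex of degree at most one, every fort avoiding $8$ has at least two vertices. Checking the ten edges inside $\set{0,1,2,4,6,7}$ shows that the only two-vertex forts there are $\set{0,7}$ and $\set{1,4}$. Vertices $2$ and $6$ are not adjacent, so three pairwise disjoint forts in those six vertices are impossible.

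\textbf{No packing or weighting certificate can work at all.} Weight $1/2$ on each of $0,\dots,7$ is a fractional transversal of value $4$ for the forts avoiding $8$. So any such bound, even rounded up, is at most $4$, while you need five vertices of $\set{0,\dots,7}$. Branching on $3$ does not rescue it: the residual forts lie in seven vertices, so at most three are pairwise disjoint, yet four more vertices are needed. The value $6$ is an integrality phenomenon and requires a genuine case analysis.

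The missing idea is the paper's complement check. If a $5$-set $B$ worked, then $8\in B$ and $Q=V(G)\setminus B$ is a $4$-subset of $\set{0,\dots,7}$. The forts $\set{1,4}$, $\set{3,5}$, $\set{0,7}$, $\set{1,2,3}$, $\set{1,3,7}$ have singleton boundaries $\set{3}$, $\set{1}$, $\set{8}$, $\set{4}$, $\set{5}$. They exclude every $Q$ containing one of them. Each of the $24$ remaining $4$-sets is checked directly to be a connected fort with boundary $\set{8}$ or $\varnothing$. So $Q$ is itself a fort disjoint from $B$.

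Your last bulleted procedure, run over the $\binom{8}{4}=70$ five-sets containing $8$, would uncover these obstructions. Present them as an exhaustive certificate rather than pruning toward a packing.

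Your heuristic for the new forts is also reversed. Adding $01$ only raises the count of the outside endpoint. So a fort of $G$ that is not a fort of $H$ is one in which that endpoint had exactly one neighbor inside, and the new edge removes it from the singleton boundary. For example, $\set{1,4}$ and $\set{0,7}$ had $H$-boundaries $\set{0,3}$ and $\set{1,8}$. An endpoint with two or more neighbors inside was never a boundary vertex. Indeed all six forts of Table~\ref{tab:Hforts} survive in $G$ with the same boundaries, which is exactly why they still certify only $4$.
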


\begin{proof}
For the upper bound, take
\[
  B_G=B_H\cup\set{0,1}=\set{0,1,2,3,5,8}.
\]
Corollary~\ref{cor:edge} already proves that this set works.
Table~\ref{tab:Gforces} gives direct certificates.

\begin{table}[H]
\centering
\caption{Forcing certificates for \(G\).}
\label{tab:Gforces}
\begin{tabularx}{\textwidth}{@{}cY@{}}
\toprule
Leak \(\lambda\) & PSD forcing sequence \\
\midrule
\(0\) & \(2\to7,\ 3\to6,\ 1\to4\) \\
\(1\) & \(2\to7,\ 0\to4,\ 3\to6\) \\
\(2\) & \(3\to6,\ 6\to4,\ 0\to7\) \\
\(3\) & \(2\to7,\ 0\to4,\ 1\to6\) \\
\(4,5,6,7,8\) & \(2\to7,\ 0\to4,\ 1\to6\) \\
\bottomrule
\end{tabularx}
\end{table}

For the lower bound, first consider the six forts in Table~\ref{tab:Gsmall}.

\begin{table}[H]
\centering
\caption{Small forts of \(G\).}
\label{tab:Gsmall}
\begin{tabular}{@{}cc@{}}
\toprule
Fort \(F\) & \(\sbd GF\) \\
\midrule
\(\set{8}\) & \(\set{0}\) \\
\(\set{1,4}\) & \(\set{3}\) \\
\(\set{3,5}\) & \(\set{1}\) \\
\(\set{0,7}\) & \(\set{8}\) \\
\(\set{1,2,3}\) & \(\set{4}\) \\
\(\set{1,3,7}\) & \(\set{5}\) \\
\bottomrule
\end{tabular}
\end{table}

Table~\ref{tab:Gfour} lists 24 more forts.
A string such as \(0125\) denotes \(\set{0,1,2,5}\).

\begin{table}[H]
\centering
\small
\caption{The four-vertex fort certificate for \(G\).}
\label{tab:Gfour}
\begin{tabularx}{\textwidth}{@{}cY@{}}
\toprule
\(\sbd GF\) & Four-vertex forts \(F\) \\
\midrule
\(\set{8}\) & \(0125,0126,0136,0156,0234,0236,0245,0246,0256,0346,0456\) \\
\(\varnothing\) & \(1256,1257,1267,1567,2346,2347,2367,2456,2457,2467,2567,3467,4567\) \\
\bottomrule
\end{tabularx}
\end{table}

Suppose a five-set \(B\) were 1-leaky PSD forcing, and put \(Q=V(G)\setminus B\).
The fort \(\set{8}\) forces \(8\in B\), so \(Q\subseteq\set{0,\ldots,7}\) and \(|Q|=4\).
The three two-vertex forts imply that \(Q\) contains none of
\[
  \set{1,4},\qquad \set{3,5},\qquad \set{0,7}.
\]
The two three-vertex forts imply that \(Q\) contains neither \(\set{1,2,3}\) nor \(\set{1,3,7}\).
The 24 entries in Table~\ref{tab:Gfour} are exactly the four-subsets of \(\set{0,\ldots,7}\) with these five avoidance properties.
Thus \(Q\) is itself a fort.
It is disjoint from \(B\), contrary to Theorem~\ref{thm:fort}.
No five-set works.
Extend any smaller set to five vertices; the same argument gives a disjoint fort.
Thus \(\zl G\ge6\).
The forcing table gives equality.
\end{proof}

\begin{proof}[Proof of Theorem~\ref{thm:main}]
The universal inequalities are Corollary~\ref{cor:edge}.
Theorem~\ref{thm:uniform} gives the negative extreme for every \(\ell\ge1\).
Theorem~\ref{thm:positive} gives the positive extreme for every \(\ell\ge2\).
Propositions~\ref{prop:H} and \ref{prop:G} give the positive extreme
\[
  \zl G-\zl{G-e}=6-4=2.
\]
Both graphs are connected.
The order is least possible for connected pairs by the verification through order eight in \cite{Elias2025}.
\end{proof}

\section{Computational verification}

The supplement contains one standalone Python script.
It uses only the standard library.
It performs five exact audits.

First, the script searches the PSD color-change state graph for each initial set and each leak.
Second, it enumerates every nonempty connected vertex set \(F\), keeps those with \(|\sbd XF|\le1\), and finds all minimum transversals.
The two methods agree:
\[
  \zl H=4,\qquad \zl G=6.
\]
The enumeration finds 194 connected forts in \(H\), 218 in \(G\), two minimum sets for \(H\), and 36 for \(G\).
It also checks every boundary and every forcing sequence used above.

Third, the script stress-tests Theorem~\ref{thm:support} for both color-change rules.
For all labeled graphs through order four, it checks every support \(S\), every admissible graph pair, every initial blue set, and every leak set.
This gives 516,416 standard and 596,184 PSD transfer implications.
For a single edited edge, the audit extends through order five and checks 2,683,436 standard and 3,297,296 PSD implications.
It also verifies 63,462 PSD parameter comparisons over all leak levels.
The same enumeration checks the low-degree obstruction for both rules through order five.
It verifies 6,504 instances of the connected-fort transversal characterization.

Fourth, the script tests Theorem~\ref{thm:uniform} for \(1\le\ell\le8\).
It examines every leak placement under both rules, for 118,556 fixed-leak instances.

Finally, it tests Theorem~\ref{thm:positive} for \(2\le\ell\le9\).
It checks all 791,374 fixed-leak upper-bound instances and 1,012 lower-bound fort certificates.

\section{Conclusion}

The conjectured increase of at most one is false, but the optimal universal replacement is two.
The bound holds for every leak level, in both edge-edit directions, and for both PSD and standard leaky forcing.
Its standard specialization is Theorem~5.2 of Bjorkman et al. \cite{Bjorkman2025}; its PSD specialization is the sharp replacement for \eqref{eq:conjecture}.
For PSD forcing, both signs are attained at every positive leak level.
The two-clique bridge family gives the negative equality, while the connected clique-leaf family gives the positive equality for every \(\ell\ge2\).
The blue-support theorem also controls simultaneous edits supported on a fixed vertex set.
Its endpoint profile recovers the conjectured bound whenever a minimum deleted-edge set contains an endpoint.
The connected-fort theorem reduces the obstruction family of Elias et al. to an explicit singleton-boundary hypergraph.
The order-nine example supplies the remaining one-leak equality case.
Appendix~\ref{app:repairs} records why several stronger local repair principles fail.

\appendix
\section{Failed local repair principles}\label{app:repairs}

The endpoint profile in Corollary~\ref{cor:endpoint} explains the extremal jump.
Exact fort enumeration gives precisely two minimum fort transversals of \(H\):
\[
  B_5=\set{2,3,5,8},\qquad B_6=\set{2,3,6,8}.
\]
Both miss the disjoint \(G\)-forts \(\set{1,4}\) and \(\set{0,7}\).
Hence no single added vertex repairs either set.
In particular, \(r_H(0,1)=0\), and the support-transfer bound is attained.

Adding both endpoints and deleting one old vertex also fails.
Table~\ref{tab:exchange} gives a disjoint fort for every possible deletion.

\begin{table}[H]
\centering
\caption{Obstructions to the two-endpoint, one-out exchange.}
\label{tab:exchange}
\begin{tabular}{@{}ccc@{}}
\toprule
Initial set & Deleted vertex \(b\) & Fort disjoint from \((B_i-b)\cup\set{0,1}\) \\
\midrule
\(B_5\) & \(2,3,5,8\) & \(2467,3467,4567,\set{8}\), respectively \\
\(B_6\) & \(2,3,6,8\) & \(2457,\set{3,5},4567,\set{8}\), respectively \\
\bottomrule
\end{tabular}
\end{table}

A natural four-corner descent principle is also false.
For \(G=H+uv\), define
\[
  \Phi(B)=\sum_{b\in B}\bigl(d_H(b,u)+d_H(b,v)\bigr).
\]
The proposed principle says the following.
If no \(B+x\) is a \(G\)-transversal, and no
\((B-b)\cup\set{u,v}\) is a \(G\)-transversal, then an exchange
\(B-b+x\) remains a minimum \(H\)-transversal and lowers \(\Phi\).

In the present graph, \(u=0\) and \(v=1\).
One has \(\Phi(B_6)=13<14=\Phi(B_5)\).
Thus \(B_6\) is the unique distance-minimizing transversal.
Its only exchange to another minimum transversal replaces \(6\) by \(5\), which raises \(\Phi\).
Let \(\mathcal F_0\) be the \(H\)-forts disjoint from \(\set{0,1}\).
Exact enumeration gives \(\tau(\mathcal F_0)=4=\zl H\).
Thus the strengthened endpoint-avoiding hypothesis does not restore the descent claim.

Even the basic endpoint-repair assertion fails on five vertices.
Let \(H_5\) be the path \(4-0-2-1-3\), and let \(G_5=H_5+01\).
The unique minimum \(H_5\)-transversal is \(\set{3,4}\).
Neither endpoint repairs it, although vertex \(2\) does.
The supplementary verifier checks these statements exactly.

\begin{samepage}
\paragraph{Data and Code Availability.}
The exact verifier is included in the Supplementary Materials accompanying this record.

\end{samepage}

\end{document}